\newtheorem{remark}{Remark}
\newtheorem{definition}{Definition}
\newtheorem{example}{Example}
\newtheorem{corollary}{Corollary}
\newtheorem{lemma}{Lemma}
\newtheorem{proposition}{Proposition}
\begin{document}

\date{}

\author{Giuseppe Gaeta \\ {Dipartimento di Matematica, Universit\`a degli Studi di Milano,} \\ {v. Saldini 50, I-20133 Milano (Italy)}; \\ {INFN, Sezione di Milano} \\ and \\
{SMRI, I-00058 Santa Marinella (Italy)} \\ {\tt giuseppe.gaeta@unimi.it} \\ {} \\
Sebastian Walcher \\ {Fachgruppe Mathematik, RWTH Aachen,} \\ {52056 Aachen (Germany)} \\ {\tt walcher@mathga.rwth-aachen.de} }

\title{On gauge transforms of autonomous ordinary differential equations}
\maketitle

\begin{abstract}
\noindent
The notion of gauge transform has its origin in Physics (Field Theory). In the present note we discuss -- from a purely mathematical perspective -- special gauge transforms of autonomous first order ODE's and their special properties. Particular attention is given to the  problem of identifying those nonautonomous ODE's which are gauge transforms of autonomous systems. 
\\ { } \\
MSC (2020):  34A99, 34A25, 34A05 
\\ { } \\
Key words: nonautonomous ODEs, normalization, symmetries, transformations
\end{abstract}

\newpage

\section{Introduction and basics}

Let an autonomous ODE
\begin{equation}\label{odeauto}
\dot x=f(x)
\end{equation}
be given on some open set in $\mathbb R^n$, with analytic\footnote{Basic notions and some results extend to sufficiently differentiable systems. Analyticity is required for the identification problem (Section 4).} right hand side. As usual, the dot symbolizes the derivative with respect to $t$.
Moreover we consider a curve in $GL(n,\mathbb R)$, i.e. an analytic (or sufficiently differentiable) map $\mathbb R\to GL(n,\mathbb R)$, $t\mapsto A(t)$, with $A(t_0)=A_0$. For the sake of simplicity (and with no loss of generality) we will take $t_0=0$ in the following.
\begin{definition}\label{defgt}
\begin{enumerate}[(a)]
\item The nonautonomous equation 
\begin{equation}\label{weirdgt}
\dot y=f^*(t,y):=\dot A A^{-1}y+Af(A^{-1}y)
\end{equation}
will be called the {\em gauge transform} of \eqref{odeauto} by $A(t)$.
\item Generally a nonautonomous equation $\dot x=q(t,x)$ will be called a gauge transform whenever there exist an autonomous equation $\dot x=f(x)$ and a curve $A(t)$ in $GL(n,\mathbb R)$ such that $q=f^*$.
\end{enumerate}
\end{definition}
We note  some elementary properties.
\begin{remark}\label{correm}{\em 
As to the relation between solutions of \eqref{odeauto} and of \eqref{weirdgt}, first recall
\[
\frac{d}{dt}\left(A^{-1}(t)\right)=-A^{-1}(t)\dot A(t)A^{-1}(t).
\]
Now, if $z(t)$ solves \eqref{odeauto}, look at the transformed solution
\[
w(t)=A(t)z(t).
\]
A straightforward calculation yields
\[
\begin{array}{rcl}
\dot w&=&\dot A z+A\dot z=\dot A z + Af(z)\\
   &=& \dot A A^{-1}w + A\left(f(A^{-1}w)\right).
\end{array}
\]
So, $w(t)$ solves \eqref{weirdgt}.
}
\end{remark}
\begin{remark}\label{secordrem}{\em 
In Mechanics, one is  interested in second order equations: Given
\[
\ddot x=h(x,\dot x);\qquad\text{equivalently  }\begin{pmatrix}\dot x\\ \dot y\end{pmatrix}=\begin{pmatrix} y\\ h(x,y)\end{pmatrix},
\]
consider a solution $z(t)$, thus $\ddot z(t)=h(z(t),\dot z(t))$. With $A(t)$ as above, and $w(t)=A(t)z(t)$, we find $\dot w(t)=\dot A(t(z(t)+A(t)\dot z(t)$, thus we arrive at
\[
\begin{pmatrix}w(t)\\ \dot w(t)\end{pmatrix}=\begin{pmatrix}A(t)&0\\ \dot A(t)&A(t)\end{pmatrix}\begin{pmatrix}z(t)\\ \dot z(t)\end{pmatrix},
\]
which brings us back to the situation of Definition \ref{defgt}.
}
\end{remark}

\section{Motivations}
\subsection
{ Motivation and examples from Physics}
\begin{itemize}
\item Gauge theories are a cornerstone of modern Physics. Gauge transformations were first considering in the context of the Electro-Magnetic field, both classical and quantum \cite{LLFT,Mes}, and their use -- or the requirement of gauge invariance and the study of spontaneous gauge symmetry breaking -- became an essential tool in general Field Theory (defined by a variational principle), in particular once the extension to non-Abelian group was understood \cite{YM}. See e.g. the textbook \cite{CM} for a physicist's perspective, or \cite{Ble} for a mathematical approach.
\item A direct obvious motivation comes from Classical Mechanics: If we consider a change of reference frame (e.g. passing from a Newtonian frame to one with the same origin but rotating w.r.t. this, maybe with a non-constant angular speed) we are indeed led to consider a gauge transformation \cite{CCL,EGH}; more generally, our formalism allows to discuss transitions between different reference frames; see also the Example below.

\item A more subtle physical motivation is the following: It is well known that when considering systems with periodically time-varying parameters one finds, in several contexts, a \emph{geometric phase} after one cycle \cite{Berry,Hannay,SW,Simon}. It was observed by Zak \cite{Zak} (see also \cite{Pati}) that if the parameters vary non-periodically, but periodically up to a gauge transformation, then one is still able to analyze the system. It is natural to extend the concept of periodicity (or, in the extreme case, constancy) up to gauge transformation in general dynamics.  

\item Finally, we recall that attention of physicists was already called to the appearance of gauge transformations in simple dynamical systems \cite{WZ}; for obvious physical reasons, however, this attention was focused on \emph{Hamiltonian} systems, while here we discuss general dynamical systems.

\end{itemize}

\begin{example}{\em Rotating reference frame 
\begin{itemize}
\item Consider two cartesian reference frames, say with the same origin for the sake of simplicity; we choose one of these, denoted as $F$, to be a (fixed) Newtonian one, and the other, denoted as $M$, to be a mobile one  rotating w.r.t. $F$ with angular velocity ${\bf A}$. Given a vector quantity  ${\bf v} (t)$, we have
\begin{equation} \label{eq:velMF} \left[ \frac{d {\bf v}}{d t} \right]_F \ = \  \left[ \frac{d {\bf v}}{d t} \right]_M \ + \ {\bf A} \wedge {\bf v} \ . \end{equation}
Correspondingly, we will denote total time derivatives in the two reference frames as $D_t^{(F)}$ and $D_t^{(M)}$ respectively, and the relation above can be written as 
\begin{equation}\label{eq:velMFb} D_t^{(F)} \ = \ D_t^{(M)} \ + \ {\bf A} \wedge \ . \end{equation}
Note that here rotations are completely arbitrary in three-dimensional space.
\item Consider now the case where dynamics takes place in the $(x_1,x_2)$ plane, and the (in general, non-uniform)  rotation is also within this plane, with angular velocity $ \omega  = \omega (t)$. Passing to the mobile frame we introduce new variables ${\bf w} = (w_1,w_2)$ with 
$$ {\bf w} \ = \ R[\vartheta (t)] \ {\bf x} \ , \ \ \ R[\vartheta] \ = \ \begin{pmatrix} \cos \vartheta & - \sin \vartheta \\ \sin \vartheta & \cos \vartheta \end{pmatrix} \  $$ 
{where $$ \vartheta = \vartheta (t) = \vartheta(0) + \int_0^t \omega (\tau) \ d \tau \ . $$} 
\item Let us consider a simple first order linear system defined in the fixed frame, 
$$ \dot{{\bf x}} \ = \ K \ {\bf x} $$
where ${\bf x} = (x_1,x_2)$ and $K$ is a constant real matrix. 
By Remark \ref{correm}, in the new variables we have 
$$ \frac{d}{d t} {\bf w} \ = \ \frac{d}{d t} ( R \, {\bf x} ) \ = \ \dot{R} \, {\bf x} \ + \ R \, \dot{\bf x} \ = \ \dot{R} \, R^{-1} \, {\bf w} \ + \ R \, K \, R^{-1} \, {\bf w} \ := \ L \, {\bf w}.  $$
Here $L$ is an explicitly time-dependent real matrix, $L =  \dot{R} R^{-1} + R K R^{-1}$, unless $\dot{R}=0$.
\item We turn to nonlinear systems. Consider
$$ \dot{{\bf x}} \ = \ K \ {\bf x} -|{\bf x}|^2 {\bf x}.$$
A straightforward calculation yields the gauge transformed system
$$ \frac{d}{d t} {\bf w} \ =  \ L \, {\bf w} -|{\bf w}|^2 {\bf w} $$
with $L$ as above. The fact that \emph{only} the linear part has changed depends on the very special form of the nonlinear one.
\item For a different nonlinear example consider 
$$ \dot{{\bf x}} \ = \ K \ {\bf x} + \Phi({\bf x}) $$
with, say, $\Phi = ( x_1 x_2 , x_2^2)$. In this case the transformed system is
$$ \frac{d}{d t} {\bf w} \ =  \ L \, {\bf w} + \Psi( {\bf w})  $$
with $L$ as above and 
$$ \Psi \ = \ \left( w_1 \, w_2 \ + \ (w_1^2 - w_2^2) \, \sin \vartheta \, \cos \vartheta \ , \ 
(w_1 \, \sin \vartheta \ - \ w_2 \, \cos \vartheta )^2 \right) . $$
\end{itemize}
}
\end{example}
\subsection{Motivation from a mathematical perspective}
By Remark \ref{correm}, solutions of gauge transforms correspond to solutions of autonomous systems. Autonomous systems have special properties, such as admitting a  local normalization (straightening near nonstationary points; Poincar\'e-Dulac normal forms near stationary points), and these -- mutatis mutandis -- carry over to gauge transforms. This correspondence will be elaborated in the following. Moreover the class of nonautonomous differential equations which are gauge transforms will be characterized  in some detail, and it will be shown that they are -- to some extent -- identifiable through explicit computations.

\section{Solution-preserving maps, symmetries}
We  turn to structural properties of gauge transforms.
\subsection{General properties}
We  discuss  the behavior with respect to solution-preserving maps.

Consider another autonomous analytic ODE
\begin{equation}\label{otherodeauto}
\dot x = g(x)
\end{equation}
on an open subset of $\mathbb R^n$, and its gauge transform $g^*(t,x)$ by $A(t)$.\\
\begin{proposition}\label{spmprop} Assume that a local diffeomorphism $\Phi$ sends solutions of \eqref{odeauto} to solutions of \eqref{otherodeauto},
and define
\[
\Gamma(t,x):=A(t)\Phi(A(t)^{-1}x).
\]
Then $\Gamma$ sends solutions of the gauge transform $f^*$ to solutions of the gauge  transform $g^*$:
If $z(t)$ solves \eqref{weirdgt}, then $\Gamma(t,z(t))$ solves $\dot y=g^*(t,y)$.
\end{proposition}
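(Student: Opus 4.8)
The plan is to reduce everything to the solution correspondence of Remark~\ref{correm} rather than to differentiate $\Gamma(t,z(t))$ directly. The definition of $\Gamma$ is exactly the conjugation of $\Phi$ by the time-dependent linear map $A(t)$, so the natural strategy is to strip off the gauge on the $f^*$-side, apply the hypothesis on $\Phi$ in the autonomous world, and re-apply the gauge on the $g^*$-side.

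First I would record the converse of Remark~\ref{correm}: if $w(t)$ solves the gauge transform $\dot y=f^*(t,y)$, then $u(t):=A(t)^{-1}w(t)$ solves the autonomous equation \eqref{odeauto}. This is the same product-rule computation read backwards. Writing $w=Au$ gives $\dot w=\dot A\,u+A\dot u$, whereas the equation \eqref{weirdgt} gives $\dot w=\dot AA^{-1}w+Af(A^{-1}w)=\dot A\,u+Af(u)$; subtracting and using invertibility of $A$ yields $\dot u=f(u)$. Hence $z\mapsto Az$ and $w\mapsto A^{-1}w$ are mutually inverse and set up a bijection between the solution sets of \eqref{odeauto} and \eqref{weirdgt}, and likewise between those of \eqref{otherodeauto} and its gauge transform.

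With this in hand the argument is a three-step chain. Let $z(t)$ solve \eqref{weirdgt}, and put $u(t):=A(t)^{-1}z(t)$; by the converse just noted, $u$ solves \eqref{odeauto}. Since $\Phi$ carries solutions of \eqref{odeauto} to solutions of \eqref{otherodeauto}, the curve $v(t):=\Phi(u(t))$ solves \eqref{otherodeauto}. Finally Remark~\ref{correm}, applied now to $g$ and $g^*$, shows that $A(t)v(t)$ solves $\dot y=g^*(t,y)$. It only remains to observe that
\[
A(t)\,v(t)=A(t)\,\Phi\!\left(A(t)^{-1}z(t)\right)=\Gamma(t,z(t)),
\]
which is exactly the assertion.

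I expect no genuine obstacle here: the statement is essentially the functoriality of the gauge transform with respect to solution-preserving maps, and once the solution-set bijection is established the conclusion is immediate. The one point requiring care is the \emph{local} nature of $\Phi$. Since $\Phi$ is only a local diffeomorphism, I would make sure that $u(t)=A(t)^{-1}z(t)$ stays in the domain of $\Phi$, so that $v(t)$ and hence $\Gamma(t,z(t))$ are defined on the relevant time interval; this is bookkeeping rather than a real difficulty.
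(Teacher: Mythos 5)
Your proof is correct, but it follows a genuinely different route from the paper's. The paper argues infinitesimally: it invokes the criterion $D\Phi(x)f(x)=g(\Phi(x))$ for the solution-preserving property, computes the partial derivatives $D_t\Gamma(t,x)$ and $D_x\Gamma(t,x)$ explicitly, and then verifies by the chain rule that $\tfrac{d}{dt}\Gamma(t,z(t))=g^*(t,\Gamma(t,z(t)))$, relying on the cancellation of the $\dot A A^{-1}$ terms. You instead upgrade Remark~\ref{correm} to a two-way correspondence --- observing that $w\mapsto A(t)^{-1}w$ inverts $z\mapsto A(t)z$ on solution sets --- and then compose three solution-preserving maps, $A^{-1}$, $\Phi$, $A$, so that the conclusion falls out of the identity $A\,\Phi(A^{-1}z)=\Gamma(t,z)$. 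Both arguments are sound. Yours is more conceptual, works directly from the solution-preserving hypothesis without passing through the infinitesimal criterion, and sidesteps the cancellation bookkeeping entirely; it makes transparent that the result is just functoriality of the gauge transform. What the paper's computation buys in exchange is the explicit formulas for $D_t\Gamma$ and $D_x\Gamma$, which are reused immediately afterwards in the discussion of infinitesimal symmetries and the Lie bracket identity \eqref{liemix}, so the direct calculation is not wasted effort there. Your closing remark about keeping $A(t)^{-1}z(t)$ inside the domain of the local diffeomorphism $\Phi$ is an appropriate piece of care that the paper leaves implicit.
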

\begin{proof}  By a familiar criterion, the solution-preserving property holds if and only if the identity
\[
D\Phi(x)f(x)=g(\Phi(x))
\]
is satisfied.\\ For the proof first note 
\[
D_t\Gamma(t,x)=\dot A \Phi(A^{-1}x)-AD\Phi(A^{-1}x)A^{-1}\dot A A^{-1}x
\]
and 
\[
D_x\Gamma(t,x)=AD\Phi(A^{-1}x)A^{-1}.
\]
Then
\begin{eqnarray*}
\frac{d}{dt}\Gamma(t,z(t))&=&D_t\Gamma(t,z(t))+D_x\Gamma(t,z(t))\dot z(t)\\
    &=& \dot A\Phi(A^{-1}z){-AD\Phi(A^{-1}z)A^{-1}\dot A A^{-1}z}\\
    &+& AD\Phi(A^{-1}z)A^{-1}\left[ {\dot AA^{-1}z}+Af(A^{-1}z)\right]\\
 &=&\dot A\Phi(A^{-1}z)+AD\Phi(A^{-1}z)f(A^{-1}z)\\
&=&\dot A \Phi(A^{-1}z)+Ag(\Phi(A^{-1}z))\\
&=&\dot AA^{-1}\Gamma(t,z)+Ag(A^{-1}\Gamma(t,z))\\
&=&g^*(t,\Gamma(t,z))
\end{eqnarray*}
as asserted.
\end{proof}
Symbolically (in informal language) one may state Proposition \ref{spmprop} as follows:
\begin{equation}
\begin{matrix}
 & \Phi & \\
 f &\longrightarrow&g  \\
 & & \\
A\big\downarrow & & \big\downarrow A\\
& & \\
  f^* & \longrightarrow& g^* \\
 & \Gamma &
\end{matrix}
\end{equation}

\begin{corollary}\label{symcor}
If the local diffeomorphism $\Phi$ is a symmetry of \eqref{odeauto} (thus sends parameterized solutions to parameterized solutions), then $\Gamma$ is a symmetry of the gauge transform \eqref{weirdgt}.
\end{corollary}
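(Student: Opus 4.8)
The plan is to recognize Corollary \ref{symcor} as the diagonal special case $g=f$ of Proposition \ref{spmprop}. A symmetry of the autonomous equation \eqref{odeauto}, understood as a local diffeomorphism sending parameterized solutions to parameterized solutions, is precisely a solution-preserving map from \eqref{odeauto} \emph{to itself}; in the infinitesimal criterion used in the proof of the proposition this is the statement $D\Phi(x)f(x)=f(\Phi(x))$, i.e. the hypothesis of the proposition with $g$ replaced by $f$. So the work is essentially to instantiate the proposition correctly rather than to carry out a fresh calculation.

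First I would invoke Proposition \ref{spmprop} with the second autonomous system \eqref{otherodeauto} chosen to be \eqref{odeauto} itself, so that $g=f$. Since the gauge transform is built from the \emph{same} curve $A(t)$ in both the source and target of the diagram, the two transforms coincide, $g^*=f^*$, and the associated map furnished by the proposition,
\[
\Gamma(t,x)=A(t)\,\Phi\bigl(A(t)^{-1}x\bigr),
\]
is exactly the one in the statement. The conclusion of the proposition then reads: if $z(t)$ solves \eqref{weirdgt}, then $\Gamma(t,z(t))$ also solves $\dot y=f^*(t,y)$.

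It remains to translate this back into the language of symmetries. A symmetry of the nonautonomous gauge transform \eqref{weirdgt} is, by definition, a (possibly time-dependent) transformation carrying solutions of $f^*$ to solutions of $f^*$; the previous paragraph supplies exactly this. Hence $\Gamma$ is a symmetry of \eqref{weirdgt}, as claimed.

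The single point requiring care — and the only place one might stumble — is the bookkeeping of definitions across the autonomous/nonautonomous divide: one must confirm that the notion of symmetry in play is the strict, parameterization-respecting one on both sides, so that it matches the solution-preserving hypothesis of Proposition \ref{spmprop}, rather than an orbital symmetry permitting reparameterization of time. Once this identification is made, no further computation is needed and the result follows by specialization.
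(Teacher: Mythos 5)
Your proof is correct and is exactly the argument the paper intends: the corollary is the special case $g=f$ of Proposition \ref{spmprop}, with $g^*=f^*$ since the same curve $A(t)$ is used, so $\Gamma$ carries solutions of \eqref{weirdgt} to solutions of \eqref{weirdgt}. Your closing caveat about the parameterization-respecting notion of symmetry matches the paper's parenthetical clarification in the statement, so nothing is missing.
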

Now consider the special setting when the diffeomorphism $\Phi$ is obtained from the local flow of a vector field $h$ with flow parameter $s$, thus
\begin{equation}\label{locflo}
\Phi(x)=\Phi_s(x)=H(s,x); \quad\text{with } \dfrac{\partial}{\partial s} H(s,x)=h(H(s,x)),\,H(0,x)=x.
\end{equation}

\begin{corollary}\label{symcor2}
Given $H$ as in \eqref{locflo}, with
\begin{equation}
\Gamma_s(t,x)= A(t)\Phi_s(t,A(t)^{-1}x)
\end{equation}
one has
\begin{equation}
\dfrac{\partial}{\partial s}\Gamma_s(t,x)|_{s=0}=A(t)h(A(t)^{-1}x)=:\widehat h(t,x);
\end{equation}
thus $\Gamma_s$ is the local flow of  $\widehat h(t,x)$, with $t$ to be considered a parameter.\\
In particular for $\Phi_s(t,x)=\exp(sB) x$ one finds $\Gamma_s$ as the local flow of \begin{equation}
\dfrac{\partial x}{\partial s}=A(t)BA(t)^{-1}x.
\end{equation}
\end{corollary}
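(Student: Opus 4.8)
The plan is to exploit the fact that, for each fixed $t$, the map $x\mapsto\Gamma_s(t,x)$ is simply the flow $\Phi_s$ conjugated by the fixed invertible linear map $A(t)$, and that conjugation by a fixed invertible matrix carries flows to flows while transforming the generating field by the corresponding pushforward. Concretely, I would first rewrite $\Gamma_s(t,x)=A(t)\,H\big(s,A(t)^{-1}x\big)$, treating $t$ as a passive parameter, so that the whole argument reduces to a differentiation in $s$. Everything then follows from the chain rule together with the defining properties of the flow $H$ recorded in \eqref{locflo}.

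First I would establish the formula for the generator. Differentiating $\Gamma_s(t,x)=A(t)H(s,A(t)^{-1}x)$ with respect to $s$ and using $\tfrac{\partial}{\partial s}H(s,x)=h(H(s,x))$ gives
\[
\frac{\partial}{\partial s}\Gamma_s(t,x)=A(t)\,h\big(H(s,A(t)^{-1}x)\big).
\]
Evaluating at $s=0$ and invoking the initial condition $H(0,x)=x$ yields $A(t)h(A(t)^{-1}x)=\widehat h(t,x)$, which is the first asserted identity.

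Next I would verify that $\Gamma_s$ is genuinely the full flow of $\widehat h$ in the variable $s$ (with $t$ held fixed), not merely a family having $\widehat h$ as its infinitesimal generator at $s=0$. Two points must be checked: the initial condition and the flow equation. The initial condition is immediate, since $\Gamma_0(t,x)=A(t)H(0,A(t)^{-1}x)=A(t)A(t)^{-1}x=x$. For the flow equation the key algebraic observation is that $A(t)^{-1}\Gamma_s(t,x)=H(s,A(t)^{-1}x)$, so the right-hand side of the displayed derivative above can be rewritten as $A(t)\,h\big(A(t)^{-1}\Gamma_s(t,x)\big)=\widehat h(t,\Gamma_s(t,x))$. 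Hence $\tfrac{\partial}{\partial s}\Gamma_s=\widehat h(t,\Gamma_s)$ holds for all $s$, which identifies $\Gamma_s$ as the local flow of $\widehat h(t,\cdot)$.

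Finally, for the special case one observes that $\Phi_s(x)=\exp(sB)x$ is precisely the flow of the linear vector field $h(x)=Bx$; substituting into $\widehat h(t,x)=A(t)h(A(t)^{-1}x)$ gives $\widehat h(t,x)=A(t)BA(t)^{-1}x$, as claimed. I do not expect any genuine obstacle in this corollary: the computation is a direct application of the chain rule and of the flow axioms, and the only point requiring a moment's care is the bookkeeping of $t$ as a passive parameter, so that the conjugation by $A(t)$ — a fixed invertible matrix for each fixed $t$ — is seen to intertwine the $s$-flows rather than interfering with the $s$-differentiation.
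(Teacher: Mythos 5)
Your proof is correct; the paper states this corollary without proof, and your chain-rule computation (differentiating $\Gamma_s(t,x)=A(t)H(s,A(t)^{-1}x)$ in $s$ with $t$ as a passive parameter, then checking the flow equation and initial condition) is exactly the intended, implicit argument. The extra care you take to verify that $\Gamma_s$ is the full flow of $\widehat h$, not just tangent to it at $s=0$, is a welcome bit of rigor but does not depart from the paper's approach.
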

\begin{remark}{\em 
Notably,  $\widehat h$ is not a gauge transform of $h$. 
Due to this fact, Lie bracket conditions for infinitesimal symmetries\footnote{{By an infinitesimal symmetry we mean a vector field whose local flow yields a local one-parameter group of symmetries.}}  do not transfer to Lie bracket conditions for gauge transforms:
Given vector fields $f$ and $h$, one has
\begin{equation}\label{liemix}
\left[\widehat h,\,f^*\right]_x= D_t\,\widehat h+ \widehat{[h,\,f]},
\end{equation}
where $[\cdot,\cdot]_x$ denotes the Lie bracket in $\mathbb R\times \mathbb R^n$ with respect to $x$, $t$ being a parameter.\\
In particular,
\[
[h,\,f]=0  \text{  iff  }\quad [\widehat h,\,f^*]_x=D_t \widehat h\quad\text{  iff  }\quad [\widehat f,\,h^*]_x=D_t \widehat f
\]
 The latter are familiar symmetry conditions, which say that
\[
\left[\begin{pmatrix}0\\ \widehat h\end{pmatrix},\,\begin{pmatrix}1\\ f^*\end{pmatrix}\right]=0\text{  resp.  } \left[\begin{pmatrix}0\\ \widehat f\end{pmatrix},\,\begin{pmatrix}1\\ h^*\end{pmatrix}\right]=0
\]
in $(t,x)$ coordinates.\\
To verify these assertions, combine 
\begin{equation*}
\begin{array}{rcl}
D_x\widehat h(t,x)&=&A\,Dh(A^{-1}x)\,A^{-1}\\
D_t\widehat h(t,x)&=&\dot A\, h(A^{-1}x)- A\,Dh(A^{-1}x)\,A^{-1}\dot A A^{-1}x
\end{array}
\end{equation*}
with
\begin{equation*}
\begin{array}{rcl}
D_xf^*(t,x)&=&\dot A A^{-1}+A\,Df(A^{-1}x)\,A^{-1}\\
\end{array}
\end{equation*}
and evaluate to get all the statements for $\widehat h$ and $f^*$. The remaining statements for the case $[h,\,f]=0$ follow by antisymmetry of the Lie bracket.}
\end{remark}

%%%%%%%%%%%%%%%%%%%%%%%%%%%%%%%%%%%%%%%%%%
\subsection{Normalization}
{Local properties of the autonomous equation \eqref{odeauto} transfer to gauge transforms.}
\subsubsection{Straightening}
In the neighborhood of a nonstationary point $w$, with $f(w)=b\not=0$ there exists -- as well known -- a diffeomorphism $\Psi$ that maps solutions of \eqref{odeauto} to solutions of 
\[
\dot x=b
\]
with constant right hand side.
Thus, $\Delta(t,x) =A(t)\Psi(A(t)^{-1}x)$ sends solutions of the gauge transform $\dot x=f^*(t,x)$ to solutions of the gauge transform
\begin{equation}\label{nonautostr}
\dot y=\dot A\,A^{-1}y + Ab.
\end{equation}
Therefore, gauge transforms are locally equivalent to this class of nonautonomous linear equations. 
\subsubsection{Linear symmetries, Poincar\'e-Dulac }
When $\dot x=Bx$ is autonomous and linear, then $[B,f]=0$ if and only if $\Phi_s(x)=\exp(sB)x$ defines a symmetry of \eqref{odeauto} for every $s$. Consequently, by Corollary \ref{symcor2}, for every $s$,
\[
\Gamma_s(t,x):=A(t)\exp(sB)A(t)^{-1}x
\]
defines a symmetry of the gauge transform \eqref{weirdgt}.\\
Here it is worth recalling the special case when $f(0)=0$ and $B=Df(0)_{s}$, the semisimple part of the linearization, and a convergent transformation to normal form exists. (For more background see e.g. \cite{CW}.)

\subsection{Explicit solutions}
Finally we mention a rather obvious property of gauge transforms: If an explicit solution of \eqref{odeauto} is known, and the transformation $A(t)$ is given, then one obtains an explicit solution of the gauge transform. This observation may be interesting when a  nonautonomous equation can be identified as a gauge transform of an autonomous system. This is the topic of the next section.

%%%%%%%%%%%%%%%%%%%%%%%%%%%%%%%%%%%%%%%%%%
\section{Identifying gauge transforms}
The close explicit correspondence between solutions of \eqref{odeauto} and of \eqref{weirdgt} makes a structural investigation of gauge transforms straightforward.\\
 However, in order to utilize special properties of gauge transforms, one needs tools to identify them among the class of nonautonomous differential equations. We turn to this problem now.  Thus for a given nonautonomous analytic system 
\begin{equation}\label{gennonauto}
\dot x =q(t,x)
\end{equation}
we consider the
{\bf identification problem:} 
\begin{center}
{\em Do there exist $A$ and $f$ so that \eqref{gennonauto} has the form \eqref{weirdgt}?} 
\end{center}
\subsection{Criteria}
We work with power series\footnote{We will consider convergent series, thus we require analyticity from here on. But most results are also valid in the formal case, or for Taylor approximations of finite order.} expansions
\begin{equation}\label{nonautops}
q(t,x)= c(t)+C(t)x+\sum_{j\geq 2}q_j(t,x), 
\end{equation}
with $c$ of degree zero, {$x\mapsto Cx$} homogeneous of degree one, and $q_j$ homogeneous of degree $j$ in $x$. Likewise,
\begin{equation}\label{autops}
f(x)=b+Bx+\sum_{j\geq 2}f_j(x)
\end{equation}
with $b$ constant, $B$ linear and $f_j$ homogeneous of degree $j$. \\

Before going into details, we observe that the linear part $C(t)$ can always  be removed from \eqref{nonautops}.
\begin{lemma}\label{linremove} Let  \eqref{gennonauto} be given, and let $T$ satisfy $\dot T=CT$, $T(0)=I$. If $u(t)$ is a solution of \eqref{gennonauto}, then $v(t):=T(t)u(t)$ solves the nonautonomous system
\begin{equation}\label{nongennonauto}
\dot y=\widetilde q(t,x)=c(t)+\sum_{j\geq2} T^{-1}q_j(t,\,Ty)=: c(t)+\sum_{j\geq 2}\widetilde q_j(t,y)
\end{equation}
with vanishing linear part.
\end{lemma}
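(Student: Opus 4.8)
The plan is to prove the lemma by a direct substitution, exactly in the style of the computation in Remark~\ref{correm}. The underlying idea is that $T$ is the fundamental matrix of the purely linear nonautonomous system $\dot x=C(t)x$, so that passing to the co-moving frame $v=T^{-1}u$ (equivalently $u=Tv$) should absorb precisely the linear part $C(t)x$ of \eqref{gennonauto} and leave the constant together with the genuinely nonlinear terms, each conjugated by $T$. I would first record the auxiliary identity
\[
\frac{d}{dt}\bigl(T^{-1}\bigr)=-T^{-1}\dot T\,T^{-1}=-T^{-1}C ,
\]
which follows from $\dot T=CT$ and is the analogue of the formula already invoked in Remark~\ref{correm}.

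Next I would differentiate the transformed variable and insert the differential equations for $T$ and for $u$. Writing $u=Tv$ and $\dot u=q(t,u)=c+Cu+\sum_{j\ge2}q_j(t,u)$, one obtains
\[
\dot v=\frac{d}{dt}\bigl(T^{-1}\bigr)u+T^{-1}\dot u
      =-T^{-1}C\,u+T^{-1}\Bigl(c+Cu+\sum_{j\ge2}q_j(t,u)\Bigr).
\]
The two terms $-T^{-1}Cu$ and $T^{-1}Cu$ cancel identically; this single cancellation is the crux of the argument, and it is forced exactly by the defining relation $\dot T=CT$. What remains, after substituting back $u=Tv$, is $T^{-1}c+\sum_{j\ge2}T^{-1}q_j(t,Tv)$, i.e. the right-hand side $\widetilde q$ of \eqref{nongennonauto}, whose linear part has indeed disappeared.

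Finally I would make explicit why the grouping by homogeneous degree is preserved, so that each $\widetilde q_j(t,y):=T^{-1}q_j(t,Ty)$ is again homogeneous of degree $j$ and no linear term can reappear. Since $T(t)$ acts \emph{linearly} on the spatial variable, the substitution $x\mapsto Tv$ carries a term homogeneous of degree $j$ to one of degree $j$, and left multiplication by $T^{-1}$ also preserves degree; hence the surviving degree-zero contribution is $T^{-1}c$, the former linear term was the unique degree-one contribution and has been annihilated, and each $q_j$ with $j\ge2$ is sent to a $\widetilde q_j$ of the same degree. Under the standing analyticity hypothesis this degree-by-degree reasoning is valid term by term on the common domain of convergence. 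I expect no serious obstacle beyond bookkeeping: the only point demanding care is the \emph{direction} of the frame change, namely that one must use $v=T^{-1}u$ paired with $\dot T=CT$, this being the unique combination for which the linear contributions cancel and the nonlinearities emerge in the conjugated form $T^{-1}q_j(t,T\,\cdot)$.
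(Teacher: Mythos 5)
Your proof is correct and follows essentially the same route as the paper's one-line argument: differentiate the transformed variable, insert $\dot T=CT$, and observe that the linear contributions cancel while the remaining terms get conjugated by $T$. Note that your computation (like the paper's own) yields the constant term $T^{-1}(t)c(t)$ rather than the $c(t)$ printed in \eqref{nongennonauto}, and requires the direction $v=T^{-1}u$ that you explicitly flag -- both discrepancies are typos in the lemma's statement, not flaws in your argument.
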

\begin{proof}
Evaluate
\[
\dot Tu+T\dot u=\frac d{dt}(Tu)=c+CTu+\sum_{j\geq2}q_j(t,Tu).
\]
\end{proof}
The following proposition has an obvious proof: The solutions of all differential equations involved are related via multiplication by some (time dependent) invertible matrix. But one may  consider it quite relevant, since it allows (in theory) to set $C=0$.
\begin{proposition}
The nonautonomous vector field $q$ in \eqref{gennonauto} is a gauge transform of some autonomous system if and only if the nonautonomous vector field $\widetilde q$ in \eqref{nongennonauto} is.
\end{proposition}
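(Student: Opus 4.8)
The plan is to isolate the single structural fact that makes the statement true: \emph{both} the passage from $q$ to $\widetilde q$ in Lemma \ref{linremove} and the formation of a gauge transform are instances of the same operation --- conjugating a system by a time-dependent invertible matrix --- and two such operations compose by ordinary matrix multiplication. Concretely, I would first record the following auxiliary claim. If $S(t)$ is any curve in $GL(n,\mathbb R)$ and $\dot y=\widehat q(t,y)$ denotes the system obtained from \eqref{gennonauto} through the substitution $y=S(t)x$ (so that $x(t)$ solves $\dot x=q$ if and only if $S(t)x(t)$ solves $\dot y=\widehat q$), then $q$ is the gauge transform of an autonomous field $f$ by $A$ if and only if $\widehat q$ is the gauge transform of the \emph{same} $f$ by $SA$.

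To verify the claim I would argue by direct computation rather than through the solution-set description, so as to keep the equivalence elementary. Assuming $q=f^*$ by $A$, i.e. $q(t,x)=\dot AA^{-1}x+Af(A^{-1}x)$, substitute $x=S^{-1}y$ and differentiate $y=Sx$:
\[
\dot y=\dot Sx+S\dot x=\dot SS^{-1}y+S\dot AA^{-1}S^{-1}y+SAf(A^{-1}S^{-1}y).
\]
Writing $B:=SA$, so that $B^{-1}=A^{-1}S^{-1}$ and $\dot BB^{-1}=\dot SS^{-1}+S\dot AA^{-1}S^{-1}$, the right-hand side collapses to $\dot BB^{-1}y+Bf(B^{-1}y)$, which is precisely the gauge transform $f^*$ by $B$. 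Thus $\widehat q=f^*$ by $SA$; and since $S\in GL(n,\mathbb R)$ the step is reversible, giving the full equivalence. (This is also transparent from Remark \ref{correm}: solutions of $f^*$ by $A$ are the curves $A(t)z(t)$, so after $y=Sx$ they become $(SA)(t)z(t)$, the solutions of $f^*$ by $SA$.)

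It then remains to specialize. By Lemma \ref{linremove}, $\widetilde q$ is exactly the system obtained from $q$ by the substitution $v=Tu$, i.e. $\widetilde q=\widehat q$ with $S=T$. Applying the claim with this $S$ shows that if $q$ is a gauge transform of some $f$, then $\widetilde q$ is a gauge transform of the same $f$, now by $TA$. For the converse, $q$ is recovered from $\widetilde q$ by the inverse substitution $u=T^{-1}v$, so $q=\widehat{\widetilde q}$ with $S=T^{-1}$; the same claim then turns a gauge-transform representation of $\widetilde q$ (by some $A$) into one of $q$ (by $T^{-1}A$). Hence each system is a gauge transform exactly when the other is, and with a common autonomous $f$.

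I do not expect a genuine obstacle here --- the computation is a one-line conjugation identity --- so the only points requiring attention are bookkeeping ones: matching the direction of the substitution ($T$ versus $T^{-1}$) with the convention of Lemma \ref{linremove}, and using $T(0)=I$ so that $B(0)=S(0)A(0)$ is again an admissible matrix with the correct initial value. Should one instead prefer the solution-set formulation of a gauge transform, the one nontrivial ingredient would be the appeal to uniqueness for the analytic system, needed to guarantee that the nonautonomous vector field is determined by its solution set; the direct computation above sidesteps even this.
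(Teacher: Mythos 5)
Your proposal is correct and follows essentially the same route as the paper, which disposes of the proposition in one sentence by observing that the solutions of all the equations involved are related by multiplication with time-dependent invertible matrices. Your explicit computation of the composition identity (gauge transform by $A$ followed by the substitution $y=S(t)x$ equals gauge transform by $SA$, via $\dot BB^{-1}=\dot SS^{-1}+S\dot AA^{-1}S^{-1}$ for $B=SA$) simply makes precise, at the level of the defining formula for $f^*$, what the paper declares obvious at the level of solutions.
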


We return to system \eqref{nonautops}.
Comparing homogeneous terms, we immediately have 
\begin{proposition}\label{degbydeg}
One has $q=f^*$ with some $A$ (given constant $B$) if and only if 
\begin{equation}\label{recconds}
\begin{array}{rcl}
c(t)&=&A(t)b\\
C(t)&=& \dot A(t)A(t)^{-1}+A(t)BA(t)^{-1}\\
q_j(t,x)&=& A(t)f_j(A(t)^{-1}x)
\end{array}
\end{equation}
for all $t,\,x$ and $j\geq 2$.
\end{proposition}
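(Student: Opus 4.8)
The plan is to substitute the power-series form \eqref{autops} of $f$ directly into the defining formula \eqref{weirdgt} and read off homogeneous components in the argument. Writing $y$ for the argument, the definition gives
\[
f^*(t,y)=\dot A A^{-1}y+Af(A^{-1}y)=\dot A A^{-1}y+A\Bigl(b+BA^{-1}y+\sum_{j\geq2}f_j(A^{-1}y)\Bigr).
\]
First I would regroup the right-hand side by degree of homogeneity in $y$: the constant (degree-zero) term is $Ab$; the linear (degree-one) term collects the connection contribution $\dot A A^{-1}y$ together with $ABA^{-1}y$ coming from the linear part of $f$; and for each $j\geq2$ the degree-$j$ contribution is $Af_j(A^{-1}y)$.

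The step deserving an explicit word is the claim that this really is the homogeneous decomposition of $f^*(t,\cdot)$. Since $x\mapsto A(t)^{-1}x$ is linear and invertible (with $t$ regarded as a parameter), $f_j(A^{-1}y)$ is again homogeneous of degree $j$ in $y$, and left multiplication by the fixed matrix $A(t)$ preserves degree; thus neither pre- nor post-composition with these linear maps mixes degrees. Consequently $Ab$, $(\dot A A^{-1}+ABA^{-1})y$, and $Af_j(A^{-1}y)$ are precisely the homogeneous components of $f^*$ in degrees $0$, $1$, and $j\geq2$.

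Finally I would invoke uniqueness of the homogeneous expansion of an analytic (equivalently formal) vector field: two such series coincide if and only if their homogeneous components agree degree by degree. Comparing the above with \eqref{nonautops}, the identity $q=f^*$ is therefore equivalent to the three families of relations in \eqref{recconds} holding for all $t$ (and all $x$, $j\geq2$), and both implications follow simultaneously because the passage from the series of $f$ to that of $f^*$ is reversible term by term. I expect no genuine obstacle here — the content is entirely the bookkeeping of homogeneous degrees; the only point requiring care is the degree-preservation claim above, after which uniqueness of the expansion closes the argument in both directions.
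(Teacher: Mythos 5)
Your proposal is correct and follows exactly the route the paper takes: the paper states the proposition with the one-line justification ``comparing homogeneous terms,'' and your argument simply spells out that comparison, including the (easy but worth noting) observation that conjugation by the linear map $A(t)$ preserves homogeneous degree so the expansion of $f^*$ is read off term by term.
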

The degree one condition is crucial here. Rewriting and using standard facts about linear differential equations, we get:
\begin{lemma}\label{cruclin}
Given the linear part $C(t)$ of $q$, for every constant matrix $B$ there exists an invertible $A$ such that the degree one condition in Proposition \ref{degbydeg} is satisfied. A necessary and sufficient condition for $A$ is to satisfy the linear differential equation
\begin{equation}\label{Alindeq}
\dot A= CA-AB.
\end{equation}
\end{lemma}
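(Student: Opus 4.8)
The plan is to reduce everything to a single linear matrix ODE and then to secure invertibility by an explicit factorization.

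First I would establish the claimed equivalence purely algebraically. The degree one condition in Proposition \ref{degbydeg} reads $C(t)=\dot A(t)A(t)^{-1}+A(t)BA(t)^{-1}$. Right-multiplying by $A(t)$ yields $C(t)A(t)=\dot A(t)+A(t)B$, which is precisely \eqref{Alindeq}; conversely, any invertible solution of \eqref{Alindeq} gives back the degree one condition upon right-multiplication by $A(t)^{-1}$. Since every step uses only that $A(t)$ is invertible, the two conditions are equivalent along any curve in $GL(n,\mathbb R)$, which settles the ``necessary and sufficient'' part of the statement.

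Next I would address existence. Equation \eqref{Alindeq} is linear in the entries of $A$: its right hand side is the linear operator $M\mapsto C(t)M-MB$ acting on the space of $n\times n$ matrices. By the standard existence and uniqueness theorem for linear systems with continuous (here analytic) coefficients, for any prescribed initial value $A(0)=A_0$ there is a unique solution on the whole interval on which $C$ is defined.

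The only point requiring real care is to guarantee that this solution stays in $GL(n,\mathbb R)$, and I would do this by exhibiting the solution explicitly. Let $P(t)$ be the principal fundamental matrix of $\dot x=C(t)x$, so that $\dot P=CP$, $P(0)=I$, with $P(t)$ invertible for all $t$; since $B$ is constant, $\exp(-tB)$ is invertible as well. Fixing any invertible $A_0$ and setting $A(t)=P(t)\,A_0\,\exp(-tB)$, a direct differentiation, using that $B$ commutes with $\exp(-tB)$, gives $\dot A=CA-AB$ together with $A(0)=A_0$. Being a product of invertible matrices, $A(t)$ lies in $GL(n,\mathbb R)$ for every $t$; equivalently one could invoke Liouville's formula, which shows that $\det A(t)$ never vanishes. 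Thus no genuine obstacle arises, and the content of the lemma is essentially the observation that \eqref{Alindeq} is a linear equation admitting invertible solutions for every choice of the constant matrix $B$.
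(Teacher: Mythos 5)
Your proposal is correct and fills in exactly the argument the paper leaves implicit ("rewriting and using standard facts about linear differential equations"): right-multiplying the degree one condition by $A$ gives \eqref{Alindeq}, and existence of invertible solutions follows from linear ODE theory. The explicit factorization $A(t)=P(t)A_0\exp(-tB)$ is a clean way to make the invertibility transparent, but it is the same route the authors intend.
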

\begin{remark}{\em 
There is a particular, and at first sight surprising,  consequence of this lemma: Every nonautonomous homogeneous linear differential equation $\dot x=C(t)x$ is a gauge transform of any autonomous homogeneous linear differential equation $\dot x=Bx$.}
\end{remark}

\begin{remark}\label{pararem}{\em We clarify the role of $A(0)=A_0$ and its relation to the ``parameter matrix'' $B$.
\begin{itemize}
\item The degree one condition shows that for given $C$, the possible transformations $A$ depend on $A_0$ and on $B$. From
\[
\dot A(0)=C(0)A_0-A_0B
\]
one sees that, conversely, 
\[
B=A_0^{-1}C(0)A_0-A_0^{-1}\dot A(0)
\]
 is determined by $A_0$ and $\dot A(0)$. To summarize, $A$ is determined by $C$, $A(0)$ and $\dot A(0)$.
\item One may normalize the invertible initial value $A_0$ to  the identity matrix $I$:  Let $A_*= AA_0^{-1}$. From the above relation for $A_0$ we get
\[
\dot A_*=\dot A A_0^{-1}=CAA_0^{-1}-ABA_0^{-1},
\]
so with $B_*:=A_0BA_0^{-1}$  we have
\[
\dot A_*=CA_*-A_*B_*,\quad A_*(0)=I
\]
with  remaining parameter $B_*$ (noting $\dot A_*(0)=C(0)-B_*$).
\end{itemize}
}
\end{remark}

We now use the criterion in Proposition \ref{degbydeg} and the subsequent observations to determine (as far as possible) an autonomous vector field $f$ such that $q$ is a gauge transform of $f$. Only the linear part $B$ remains as a parameter. For the proof of the following just compare terms of the same degree.
\begin{lemma}\label{wgtlem}
Consider the nonautonomous analytic system \eqref{gennonauto}, with
\eqref{nonautops} the power series expansion with respect to $x$. Then
\begin{equation}\label{wgtcond}
q(t,x)=\dot A A^{-1}x+ Af(A^{-1}x)
\end{equation}
with an autonomous system \eqref{odeauto}, and Taylor expansion \eqref{autops}, and invertible $A(t)$ with $A(0)=I$,
only if
\begin{itemize}
\item $\dot A(0)=C(0)-B$;
\item $ c(0)=b$;
\item $q_j(0,x)=f_j(x)$ for all $j\geq 2$.
\end{itemize}
\end{lemma}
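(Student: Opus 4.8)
The plan is to observe that the hypothesis \eqref{wgtcond} is nothing but the statement $q=f^*$, so that all the degree-by-degree identities of Proposition \ref{degbydeg} are at my disposal for every $t$, and then to extract the three asserted conditions simply by specializing those identities to $t=0$, where the normalization $A(0)=I$ collapses every conjugation to the identity. This matches the hint preceding the lemma: compare terms of equal degree, then evaluate at the base point.

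First I would expand the right-hand side of \eqref{wgtcond}. Inserting $f(x)=b+Bx+\sum_{j\ge2}f_j(x)$ gives
\[
\dot A A^{-1}x+Af(A^{-1}x)=A b+\bigl(\dot A A^{-1}+ABA^{-1}\bigr)x+\sum_{j\ge2}A\,f_j(A^{-1}x),
\]
where I have used that $f_j(A^{-1}x)$ is again homogeneous of degree $j$ in $x$. Matching this, degree by degree, against the expansion \eqref{nonautops} of $q$ reproduces exactly the three families of relations \eqref{recconds}, namely $c(t)=A(t)b$, $C(t)=\dot A(t)A(t)^{-1}+A(t)BA(t)^{-1}$, and $q_j(t,x)=A(t)f_j(A(t)^{-1}x)$ for $j\ge2$, each holding for all $t$ in the common domain. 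The decisive step is then to put $t=0$: since $A(0)=I$ and hence $A(0)^{-1}=I$, the first relation yields $c(0)=b$, the third yields $q_j(0,x)=f_j(x)$ for every $j\ge2$, and the second becomes $C(0)=\dot A(0)+B$, i.e.\ $\dot A(0)=C(0)-B$. These are precisely the three necessary conditions, so the ``only if'' direction is complete.

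I do not anticipate a genuine obstacle: the lemma asserts only a set of necessary conditions obtained by evaluating already-established identities at a single point, and the choice $A(0)=I$ eliminates all bookkeeping. The content worth emphasizing, rather than any difficulty, is interpretive: at $t=0$ the constant term $b=c(0)$ and all higher Taylor coefficients $f_j=q_j(0,\cdot)$ of the sought autonomous field are read off directly from $q$, so that $f$ is entirely determined once the linear part $B$ is fixed, while $\dot A(0)=C(0)-B$ pins down the remaining free initial datum for $A$. The actual reconstruction of $A$ over all $t$ is not part of this statement; it is deferred to the linear equation \eqref{Alindeq} and Lemma \ref{cruclin}, with $B$ the lone surviving parameter.
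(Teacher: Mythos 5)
Your proposal is correct and follows exactly the route the paper intends: it expands \eqref{wgtcond} degree by degree to recover the identities \eqref{recconds} of Proposition \ref{degbydeg}, then evaluates at $t=0$ using $A(0)=I$ to read off the three conditions. This is precisely the ``just compare terms of the same degree'' argument the paper gives, so no further comment is needed.
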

One may restate Proposition \ref{degbydeg} without explicit reference to $f$:
\begin{proposition}\label{wgtprop}
The nonautonomous vector field $q$ as given in \eqref{gennonauto},  \eqref{nonautops} is a gauge transform  if and only if there exists $B$ such that
\begin{itemize}
\item $\dot A(t)=C(t)A(t)-A(t)B$; $A(0)=I$;
\item $ c(t)=A(t)c(0)$;
\item $q_j(t,x)=A(t)q_j(0,A(t)^{-1}x)$ for all $j\geq 2$.
\end{itemize}
\end{proposition}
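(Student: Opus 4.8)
The plan is to read this proposition as Proposition \ref{degbydeg} with the autonomous field $f$ eliminated. The mechanism for the elimination is Lemma \ref{wgtlem}: once the normalization $A(0)=I$ is imposed, every Taylor coefficient of $f$ is forced by the data of $q$ at $t=0$, namely $b=c(0)$ and $f_j=q_j(0,\cdot)$, with only the linear parameter $B$ left free. Substituting these into the three identities \eqref{recconds} should turn the conditions that mention $f$ into self-referential conditions on $q$ alone, which are precisely the three asserted bullets.

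First I would establish the forward implication, and here the one genuinely delicate point arises: Definition \ref{defgt}(b) permits an arbitrary invertible initial value $A_0$, whereas the proposition is stated with $A(0)=I$. I therefore need to reduce to the normalized case. Following Remark \ref{pararem}, I would replace $A$ by $A_*=AA_0^{-1}$ and $f$ by the fixed linear conjugate $\widetilde f(z)=A_0f(A_0^{-1}z)$; a short computation using $\dot AA^{-1}=\dot A_*A_*^{-1}$ and $Af(A^{-1}y)=A_*\widetilde f(A_*^{-1}y)$ shows that the gauge transform is unchanged while $A_*(0)=I$. With the normalization secured, Lemma \ref{wgtlem} supplies $b=c(0)$, $B=C(0)-\dot A(0)$ and $f_j=q_j(0,\cdot)$, and feeding these into \eqref{recconds} yields $c(t)=A(t)c(0)$, the linear equation $\dot A=CA-AB$, and $q_j(t,x)=A(t)q_j(0,A(t)^{-1}x)$.

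For the converse I would simply define $f(x)=c(0)+Bx+\sum_{j\ge2}q_j(0,x)$, which is a genuine analytic autonomous field since $x\mapsto q(0,x)$ is analytic, and then verify $q=f^*$ by comparing homogeneous terms: the degree-zero part gives $A(t)c(0)=c(t)$, the linear part gives $(\dot A+AB)A^{-1}=C$ upon using $\dot A=CA-AB$, and the degree-$j$ parts give $A(t)q_j(0,A(t)^{-1}x)=q_j(t,x)$. Invertibility of $A(t)$ for all $t$ comes free of charge from Lemma \ref{cruclin}, the matrix $A$ being the solution of the linear equation \eqref{Alindeq} with invertible initial value. Apart from the normalization step, the argument is a routine degree-by-degree comparison, enabled entirely by the prior observation that $f$ is determined by $q(0,\cdot)$ and $B$.
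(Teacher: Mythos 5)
Your proposal is correct and follows essentially the route the paper intends: the paper gives no separate proof, presenting the proposition as a restatement of Proposition \ref{degbydeg} with $f$ eliminated via Lemma \ref{wgtlem}, which is exactly your mechanism. Your explicit treatment of the normalization $A(0)=I$ (via Remark \ref{pararem}) and the degree-by-degree verification of the converse merely spell out details the paper leaves implicit.
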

%%%%%%%%%%%%%%%%%%%%%%%%%%%%%%%%%%%%%%%%%%%
\subsection{Vanishing linear part}
For systems  \eqref{gennonauto} with vanishing linear part, thus $C(t)$ identically zero,
we state a special consequence of Proposition \ref{wgtprop}.
\begin{corollary}\label{nolincor} Let $q$ be given with $C=0$. Then $q$ is a gauge transform of some $f$ via $A(t)$ if and only if the following identities hold:
\begin{itemize}
\item $A(t)=\exp (-tB)$ for some constant matrix $B$;
\item $c(t)=\exp(-tB)c(0)$;
\item $q_j(t,x)=\exp(-tB)q_j(0,\exp(tB)x)$
\end{itemize}
\end{corollary}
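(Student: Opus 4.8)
The plan is to specialize Proposition \ref{wgtprop} to the case $C \equiv 0$ and to solve the resulting matrix differential equation explicitly. By that proposition, $q$ is a gauge transform if and only if there exists a constant matrix $B$ for which the three listed conditions hold; with $C = 0$ the first of these becomes the homogeneous linear matrix ODE $\dot A(t) = -A(t)B$ with initial condition $A(0) = I$. The entire corollary will then follow by rewriting the remaining two conditions once $A$ has been determined.

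First I would observe that this linear initial value problem has a unique solution, namely $A(t) = \exp(-tB)$. Indeed, since $B$ is constant and commutes with every power series in $B$, one computes $\dot A = -B\exp(-tB) = -\exp(-tB)B = -AB$, and $A(0) = I$; by the uniqueness theorem for linear matrix ODEs no other solution exists. Thus, for a given $B$, the first condition of Proposition \ref{wgtprop} holds precisely when $A(t) = \exp(-tB)$, and correspondingly $A(t)^{-1} = \exp(tB)$.

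Next I would substitute $A(t) = \exp(-tB)$ and $A(t)^{-1} = \exp(tB)$ into the remaining two conditions of Proposition \ref{wgtprop}. The condition $c(t) = A(t)c(0)$ becomes $c(t) = \exp(-tB)c(0)$, and the condition $q_j(t,x) = A(t)q_j(0, A(t)^{-1}x)$ becomes $q_j(t,x) = \exp(-tB)q_j(0, \exp(tB)x)$. This yields both implications simultaneously: if $q$ is a gauge transform, then the three conditions of Proposition \ref{wgtprop} hold for some $B$, and the computation above shows they are exactly the stated identities; conversely, if the stated identities hold, then setting $A(t) = \exp(-tB)$ verifies all three conditions (the first by the direct differentiation, the other two by the identities themselves), so Proposition \ref{wgtprop} certifies that $q$ is a gauge transform.

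Since the statement is a direct specialization, there is essentially no hard step. The only point requiring care is confirming that $B$ commutes with $\exp(-tB)$, so that the matrix ODE $\dot A = -AB$ is consistent with the closed-form exponential, and invoking uniqueness for the linear initial value problem to exclude any competing $A$. Everything else is a routine substitution of $A(t) = \exp(-tB)$ into the conditions already established in Proposition \ref{wgtprop}.
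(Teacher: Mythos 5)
Your proof is correct and follows exactly the route the paper intends: the corollary is stated as a direct specialization of Proposition \ref{wgtprop} to $C\equiv 0$, where the first condition reduces to $\dot A=-AB$, $A(0)=I$, whose unique solution is $A(t)=\exp(-tB)$, after which the remaining conditions are obtained by substitution. The paper gives no separate proof, and your argument supplies precisely the omitted routine verification.
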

With these criteria we can easily (if belatedly) show that  gauge transforms are indeed distinguished among nonautonomous systems.
\begin{example}{\em  The system\footnote{This is only a quadrature problem, but here we want to exhibit the simplest example.}
\[
\dot x=c(t)
\]
 is the gauge transform of some $f$
by some $A(t)$ if and only if 
\[
c=\exp(-tB)\,b.
\]
Thus $c$ must be a a linear combination of products of polynomials and exponential functions. This is clearly not satisfied in general. }
\end{example}
We now discuss the third condition in Corollary \ref{nolincor} in a systematic manner. We set
\[
p_j(x):=q_j(0,x),\quad j\geq 2, 
\]
thus the condition may be restated as 
\[
q_j(t,x)=\exp(-tB)p_j(\exp(tB)x),\quad j\geq 2.
\]
Differentiation wirh respect to $t$ yields
\[
D_tq_j(t,x)=\exp(-tB)\left[B,\,p_j\right]\,\left(\exp(tB)x\right),
\]
and in particular
\begin{equation}\label{brackrest}
r_j(x):=D_tq_j(t,x)|_{t=0}=\left[B,\,p_j\right](x).
\end{equation}
We denote by ${\mathcal P}_{j-1}$  the (finite dimensional) space of homogeneous polynomial maps from $\mathbb R^n$ to $\mathbb R^n$,  of degree $j$. Thus  $p_j\in{\mathcal P}_{j-1}$, and $B\in\mathcal P_0$. (We may identify these spaces with their coefficient spaces  with respect to some basis of $\mathbb R^n$.) \\
The following proposition shows that the property of being a gauge transform is rather restrictive for nonlinear systems.
The proof uses results from R\"ohrl \cite{Rohrl}, and from \cite{KLPW}, Appendix. 
\begin{proposition}\label{restricpop} Consider  $q$ in \eqref{gennonauto} with linear part $C=0$.
Given $k\geq 2$, there exists a Zariski-open (and dense) subset $\mathcal Q_{k-1}$ of $\mathcal P_{k-1}$ such that for $p_k\in \mathcal Q_{k-1}$ equation \eqref{brackrest} has at most one solution. \\
Therefore, if $q$ is a gauge transform, then $B$ is uniquely determined by $q_k$ whenever $p_k\in \mathcal Q_{k-1}$.
\end{proposition}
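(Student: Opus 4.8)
The plan is to read \eqref{brackrest} as a \emph{linear} equation in the unknown matrix $B\in\mathcal P_0$. For fixed $p_k\in\mathcal P_{k-1}$ the assignment
\[
L_{p_k}\colon \mathcal P_0\longrightarrow\mathcal P_{k-1},\qquad B\longmapsto [B,\,p_k],
\]
is linear; it is, up to sign, the infinitesimal action of $\mathfrak{gl}(n)\cong\mathcal P_0$ on $\mathcal P_{k-1}$ at $p_k$. Equation \eqref{brackrest} then reads $L_{p_k}(B)=r_k$, and it has \emph{at most one} solution precisely when $L_{p_k}$ is injective, i.e. when $\ker L_{p_k}=\{0\}$. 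Since $\ker L_{p_k}=\{B:\ [B,p_k]=0\}$ is exactly the space of linear infinitesimal symmetries (the linear centralizer) of $p_k$, the proposition is equivalent to the statement that a generic $p_k$ has trivial linear centralizer. As a necessary consistency check, injectivity requires $\dim\mathcal P_0=n^2\le\dim\mathcal P_{k-1}$, which indeed holds for all $k\ge2$.

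First I would show that the exceptional set is Zariski-closed. Fixing bases of $\mathcal P_0$ and $\mathcal P_{k-1}$ and identifying these spaces with their coefficient spaces, the bilinearity of $(B,p)\mapsto[B,p]$ implies that the matrix representing $L_{p_k}$ has entries depending linearly, hence polynomially, on the coefficients of $p_k$. Now $L_{p_k}$ is non-injective exactly when $\operatorname{rank}L_{p_k}<n^2$, that is, when all $n^2\times n^2$ minors of that matrix vanish; these minors are polynomials in the coefficients of $p_k$. Hence
\[
\mathcal B:=\{\,p_k\in\mathcal P_{k-1}:\ \ker L_{p_k}\neq\{0\}\,\}
\]
is Zariski-closed, and $\mathcal Q_{k-1}:=\mathcal P_{k-1}\setminus\mathcal B$ is Zariski-open. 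On $\mathcal Q_{k-1}$ the map $L_{p_k}$ is injective, so \eqref{brackrest} has at most one solution; and since $q_k$ determines both $p_k=q_k(0,\cdot)$ and $r_k=D_tq_k(t,x)|_{t=0}$, this yields the final assertion that $B$ is uniquely determined by $q_k$.

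It remains to prove $\mathcal Q_{k-1}\neq\emptyset$ (equivalently $\mathcal B\neq\mathcal P_{k-1}$); density is then automatic, a nonempty Zariski-open subset of the irreducible affine space $\mathcal P_{k-1}$ being dense. Euler's relation gives $[\lambda I,p_k]=(k-1)\lambda\,p_k$, which is nonzero for $k\ge2$, $\lambda\neq0$ and $p_k\neq0$, so $\ker L_{p_k}$ contains no nonzero scalar matrix -- a first indication that trivial centralizers are to be expected. To complete the argument I would invoke the cited results of R\"ohrl \cite{Rohrl} and of \cite{KLPW} (Appendix), which guarantee that a generic homogeneous polynomial vector field of degree $k\ge2$ has trivial linear symmetry algebra; equivalently, one exhibits a single explicit $p_k$ whose linear centralizer is computed to be $\{0\}$. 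This existence step is exactly the main obstacle: the reduction to injectivity and the Zariski-closedness of $\mathcal B$ are formal, whereas producing, for every $n$ and every $k\ge2$, one field with trivial linear centralizer is the substantive input, and it is precisely what the references supply.
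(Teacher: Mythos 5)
Your reduction of the statement to the injectivity of the linear map $L_{p_k}\colon B\mapsto[B,p_k]$ is exactly the paper's starting point, and your determinantal argument (entries of the matrix of $L_{p_k}$ depend polynomially on the coefficients of $p_k$, so the non-injectivity locus is cut out by the vanishing of all $n^2\times n^2$ minors) is a clean and correct way to get Zariski-closedness of the exceptional set --- arguably more explicit than the paper, which inherits openness from the set constructed in the references. The dimension check and the Euler-relation observation are also fine.

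The gap is at the decisive step, the nonemptiness of $\mathcal Q_{k-1}$. You attribute to R\"ohrl and to the appendix of the cited paper the statement that a generic homogeneous vector field of degree $k\ge 2$ has trivial linear centralizer; that is not what those references prove. What they supply is that, after complexifying, a generic $p\in\mathcal P_{k-1}$ has only finitely many idempotents (points $c\ne 0$ with $p(c)=c$) and that these idempotents contain a basis of $\mathbb C^n$. The bridge from this to the triviality of $\ker L_{p}$ is the actual content of the paper's proof and is absent from yours: if $[B,p]=0$ then $\exp(sB)$ is an automorphism of $p$ for every $s$; automorphisms permute the idempotents, and since the idempotents span, an automorphism is determined by its action on them, so the automorphism group of $p$ is finite; a finite group admits no nontrivial one-parameter subgroup, whence $B=0$. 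Your proposed fallback --- exhibit, for every $n$ and every $k\ge2$, a single explicit $p_k$ with trivial linear centralizer --- would indeed close the argument given your openness step, but you do not carry out that computation, so as written the substantive input is neither supplied by the references in the form you invoke them nor verified by hand.
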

\begin{proof}
We fix $m\geq 2$, and for some $p\in \mathcal P_{m-1}$, we consider the linear map
\begin{equation}\label{testmap}
\mathcal P_0\to\mathcal P_{m-1},\quad B\mapsto \left[B,\,p\right].
\end{equation}
In the following we may and will turn to complexifications.
The assertion of the proposition says that this map is injective for all $p$ in a Zariski-open and dense subset of $\mathcal P_{m-1}$. Let $B$ lie in the kernel of this map. Then for all $s$,  $\exp(sB)$ is an automorphism\footnote{By definition, an invertible linear map $T$ is an automorphism of $p$ if the identity $p(Tx)=T\,p(x)$ holds.} of $p$. \\
 Following \cite{Rohrl}, we call $c\in \mathbb C^n$ an {\em idempotent} of $p$ if
\[
p(c)=c\not=0.
\]
By \cite{Rohrl} and \cite{KLPW}, there exists a Zariski-open and dense subset $\mathcal Q_{m-1}$ of $\mathcal P_{m-1}$ such that every $p\in\mathcal Q_{m-1}$ has only finitely many idempotents, and moreover admits a basis of idempotents. Since automorphisms send idempotents to idempotents,  an automorphism of $p$ is uniquely determined by its action on idempotents. So, this group of automorphisms is finite, which implies $B=0$. Therefore \eqref{testmap} defines an injective map.
\end{proof}
\begin{remark}\label{uniqrem}{\em 
Given the identification of $\mathcal P_{m-1}$ with a coefficient space, equation\eqref{brackrest} amounts to a system of linear equations for the entries of $B$. Thus one is left with elementary linear algebra.\\
If $p\in\mathcal Q_{m-1}$  (in the notation of Proposition \ref{restricpop}), whenever a solution $B$ exists for \eqref{brackrest}, it is unique and it completely determines the nonautonomous system.}
\end{remark}
\begin{example} {\em To illustrate feasibility of the computations we consider 
\begin{equation}\label{quadex}
q(t,x)=q_2(t,x)=\begin{pmatrix} \left(1+ta_1(t)\right)x_1^2-\left(1+ta_2(t)\right)x_2^2\\  2\left(1+ta_3(t)\right)x_1x_2\end{pmatrix},
\end{equation}
which is homogeneous quadratic in $x$. Thus with the notation introduced above we have 
\[
p_2(x)=\begin{pmatrix} x_1^2-x_2^2\\  2x_1x_2\end{pmatrix},\quad  r_2(x)=\begin{pmatrix} a_1(0)x_1^2-a_2(0)x_2^2\\  2a_3(0)x_1x_2\end{pmatrix}.
\]
\begin{itemize}
\item We want to decide under which conditons on the $a_i(t)$, $q$ is a gauge transform. Thus we need to decide whether there exists a matrix $B=\begin{pmatrix} b_1&b_2\\ b_3&b_4\end{pmatrix}$ such that $\left[B,\,p_2\right]=r_2$, and determine this matrix if it exists. By straightforward calculations,
\[
\left[B,\,p_2\right](x)=\begin{pmatrix} b_1x_1^2-2b_3x_1x_2+(b_1-2b_4)x_2^2\\ b_3x_1^2+2b_1x_1x_2+(2b_2+b_3)x_2^2\end{pmatrix}.
\]
Therefore the space of all homogeneous quadratic vector fields
\[
\begin{pmatrix} c_1x_1^2+c_2x_1x_2+c_3x_2^2\\ c_4x_1^2+c_5x_1x_2+c_6x_2^2\end{pmatrix}\in \left[{\mathcal P}_1,\,p_2\right]
\]
is characterized by the conditions $2c_1-c_5=2c_2+c_2=0$, and  
\[
B=\begin{pmatrix} c_1 &\frac12(c_6-c_4)\\c_4& \frac12(c_1+c_3)\end{pmatrix}
\]
if these conditions are satified. (One can check that $p_2\in\mathcal Q_1$, but this is not necessary for the computations.)
For the special $r_2$ above we have the single condition $a_1(0)=a_3(0)$ for existence of $B$, and we find
\[
B=\begin{pmatrix}\lambda & 0\\ 0&\mu\end{pmatrix},\quad\text{with  }\lambda=a_1(0),\,\mu=a_1(0)+a_2(0).
\]
With Corollary \ref{nolincor} we arrive at
\[
q_2(t,x)=\begin{pmatrix} e^{-\lambda t}& 0 \\0 & e^{-\mu t}\end{pmatrix}\begin{pmatrix}(e^{\lambda t}x_1)^2 - (e^{\mu t}x_2)^2\\ 2e^{\lambda t}x_1e^{\mu t}x_2\end{pmatrix}=\begin{pmatrix} e^{\lambda t}x_1^2-e^{(2\mu-\lambda) t}x_2^2\\ 2e^{\lambda t}x_1x_2\end{pmatrix}.
\]
Thus all the gauge transforms among the systems \eqref{quadex} have been determined.
\item Since the solution of $\dot x=p_2(x)$ (which is the complex equation $\dot w=w^2$ written in real coordinates) can be determined explicitly, the same holds for the gauge transform found above. One finds, for initial value $v$, the solution
\[
\dfrac{1}{(1-tv_1)^2+(tv_2)^2}\begin{pmatrix}e^{-\lambda t}(v_1-t(v_1^2+v_2^2))\\ e^{-\mu t}v_2\end{pmatrix}.
\]
\item Looking beyond the homogeneous case, for any setting with
\[
q(t,x)=q_2(t,x)+\text{ higher order terms}
\]
we see from Remark \ref{uniqrem} that $B$ and $q$ are completely determined by $q_2$.
\end{itemize}
}
\end{example}
%%%%%%%%%%%%%%%%%%%%%%%%%%%%%%%%%%%%%%%%%%
\section{Conclusion}
Generally, the theory of nonautonomous differential equations does not have a close connection to the more specialized theory of autonomous systems, even considering the familiar procedure to add time $t$ as a new dependent variable (which is of little help in qualitative investigations). From this vantage point it is of some interest to look into nonautonomous systems which have a closer relation to autonomous ones. \\
Special gauge transforms (via {time-dependent} linear transformations) of autonomous equations form such a distinguished class. We showed that the equations in this class possess special properties, for instance regarding symmetries and local normalization. \\
We characterized those nonautonomous equations which are gauge transforms of autonomous systems, and showed that identification is feasible by explicit computations for systems with vanishing linear part. For the general case one could say that Lemma \ref{linremove}, while obviously a valuable tool for theoretical considerations, also represents a stumbling block for explicit calculations because it is valid for completely arbitrary $C(t)$. But it is possible to proceed from this vantage point when $C(t)$ has special properties.
%%%%%%%%%%%%%%%%%%%%%%%%%%%%%%%%%%%%%%%%%%%%%

\subsection*{Acknowledgements}

{We thank the anonymous reviewers for their comments and suggestions.}\\The work of GG is partially supported by the project {\it ``Mathematical Methods in Non-Linear Physics''} (MMNLP) of INFN, and by GNFM-INdAM. In the course of this work GG enjoyed the hospitality and the relaxed work atmosphere of SMRI.

%%%%%%%%%%%%%%%%%%%%%%%%%%%%%%%%%%%%%%%%%%%%%


\begin{thebibliography}{29}

\bibitem{ArnODE} V.I. Arnold, {\it Ordinary Differential Equations}, Springer 1992


\bibitem{Berry} M.V. Berry, ``Quantal phase factors accompanying adiabatic changes'', {\it Proc. R. Soc. London A} {\bf 392} (1984), 45-57

\bibitem{Ble} {D. Bleecker, {\it Gauge Theories and Variational Principles}, Dover 2005}

\bibitem{CM} {N. Cabibbo, L. Maiani and O. Benhar, {\it An Introduction to Gauge Theories}, CRC Press, 2017}

\bibitem{CCL} S.S. Chern, W.H. Chen and K.S. Lam, {\it Lectures on Differential Geometry}, World Scientific 1999

\bibitem{CGbook} G. Cicogna and G. Gaeta,
{\it Symmetry and Perturbation Theory in Nonlinear Dynamics},
Springer 1999

\bibitem{CW} G. Cicogna and S. Walcher, 
``Convergence of normal form transformations: the role of symmetries''
{\it Acta Appl. Math.} {\bf 70} (2002), 95-111

\bibitem{EGH} T. Eguchi, P.B. Gilkey and A.J. Hanson, ``Gravitation, gauge theories and differential geometry'', {\it Phys. Rep.} {\bf 66} (1980),  213-393

\bibitem{JKH} J.K. Hale, {\it Ordinary Differential Equations}, Dover 2009

\bibitem{Hannay} J.H. Hannay, ``Angle variable holonomy in adiabatic excursion of an integrable Hamiltonian'', {\it J. Phys. A} {\bf 18} (1985), 221-230

\bibitem{KLPW}  N.~Kruff, J.~Llibre, C.~Pantazi and S. Walcher, ``Invariant algebraic surfaces of polynomial vector fields in dimension three'',  {\it J.~Dynam.~Differential Equations} {\bf35} (2023), 3241-3268

\bibitem{LLFT} {L.D. Landau and E.M. Lifshitz, {\it The Classical Theory of Fields}, Pergamon Press 1975}

\bibitem{Mes} {A. Messiah, {\it Quantum Mechanics}, Dover 1995}

\bibitem{Pati} A.K. Pati, ``Adiabatic Berry phase and Hannay angle for open paths'', {\it Ann. Phys.} {\bf 270} (1998), 178-197

\bibitem{Rohrl}H.~ R\"ohrl,  ``Finite-dimensional algebras without nilpotents over algebraically closed fields'', {\it Arch. Math.
(Basel)} {\bf 32} (1979), 10-12.

\bibitem{SW} A. Shapere and F. Wilczek eds., {\it Geometric Phases in Physics}, World scientific 1989

\bibitem{Simon} B. Simon, ``Holonomy, the quantum adiabatic theorem, and Berry's phase'', {\it Phys. Rev. Lett.} {\bf 51} (1983), 2167-2170

\bibitem{FV} F. Verhulst, {\it Nonlinear Differential Equations and Dynamical Systems}, Springer 1990

\bibitem{WZ}   F. Wilczek and A. Zee, ``Appearance of gauge structure in simple dynamical systems'', {\it Phys. Rev. Lett.} {\bf 52} (1984), 2111-2114

\bibitem{YM} {C.N. Yang and R.L. Mills, ``Conservation of isotopic spin and isotopic gauge invariance'', {\it Physical Review} {\bf 96} (1954), 191-195}

\bibitem{Zak} J. Zak, ``Berry's geometrical phase for noncyclic Hamiltonians'', {\it Europhys. Lett.} {\bf 9} (1989), 615-620
        



\end{thebibliography}
\end{document}